\newcommand{\diam}{\mathrm{diam}}
\newcommand{\dis}{\mathrm{d}}
\newcommand{\diss}{\mathrm{d_2}}
\newcommand{\comment}[1]{}
\newif\ifpdf
\newtheorem{thm}{Theorem}[section]
\newtheorem{observation}[thm]{Observation}
\newtheorem{example}[thm]{Example}
\newtheorem{corollary}[thm]{Corollary}
\newtheorem{lemma}[thm]{Lemma}
\newtheorem{theorem}[thm]{Theorem}
\newtheorem{conjecture}[thm]{Conjecture}
\begin{document}

\title{Diameter of 2-distance graphs}
\author{{\small S.H. Jafari$^{a}$, S.R. Musawi}$^{b}$\\
\\{\small $^{a}$Faculty of Mathematical Sciences, Shahrood University of Technology,}\\ {\small  P.O. Box 36199-95161, Shahrood, Iran}\\
{\small Email: shjafari@shahroodut.ac.ir, shjafari55@gmail.com}\\
\\{\small $^{b}$Faculty of Mathematical Sciences, Shahrood University of Technology,}\\ {\small  P.O. Box 36199-95161, Shahrood, Iran}\\
{\small Email: r\_musawi@shahroodut.ac.ir, r\_musawi@yahoo.com}\\
}
\date{}
\maketitle

\begin{abstract}
For a simple graph $G$, the $2$-distance graph, $D_2(G)$, is a graph with the vertex set $V(G)$ and two vertices are adjacent if and only if their distance is $2$ in the graph $G$. In this paper, for graphs $G$ with diameter 2, we show that $diam(D_2(G))$ can be any integer $t\geqslant2$. For graphs $G$ with $diam(G)\geqslant3$, we prove that $\frac{1}{2}diam(G)\leqslant diam(D_2(G))$ and this inequality is sharp. Also, for $diam(G)=3$, we prove that $diam(D_2(G))\leqslant5$ and this inequality is sharp.
\end{abstract}

Keywords: 2-distance graph, paths, diameter, connectivity\\
Mathematics Subject Classification : 05C12, 05C38, 05C40

\section{introduction}

In this paper, we only consider the undirected finite simple graphs $G = (V,E)$.
 The distance between two vertices of $x,y\in V(G)$, $\dis(x,y)=\dis_G(x,y)$, is the length of the shortest path between them. The diameter of $G$, $\diam(G)$, is the maximum distance between vertices of $G$.
$2$-distance graph of a graph $G$, $D_2(G)$, is a graph with the  vertex set $V(G)$ and two vertices are adjacent if and only their distance is $2$ in the graph $G$. For any $u,v\in V(D_2(G))$, we set $\diss(u,v)=\dis_{D_2(G)}(u,v)$.

The idea of 2-distance graphs is a particular case of a general notion of k-distance
graphs, which was first studied by Harary et al. \cite{hhk}. They investigated the connectedness of 2-distance graph of some graphs. In the book by Prisner  \cite{p}, the dynamics of the k-distance operator was explored. Furthermore, Boland et al. \cite{bhl} extended the k-distance operator to a graph invariant they call distance-n domination number.
To solve one problem posed by Prisner \cite{p}, Zelinka
\cite{z} constructed a graph that is r-periodic under the 2-distance operator for each
positive even integer r. Prisner’s problem was completely solved by Leonor in her
master’s thesis \cite{l}, wherein she worked with a graph construction different from
that of Zelinka’s.
Recently, Azimi and Farrokhi \cite{af2} studied all graphs whose 2-distance graphs
have maximum degree 2. They also solved the problem of finding all graphs whose
2-distance graphs are paths or cycles. More recently, the same authors \cite{af1} determined
all finite simple graphs with no square, diamond, or triangles with a common
vertex that are self 2-distance graphs (that is, graphs $G$ satisfying the equation
$D^2(G)=G$. They further showed the nonexistence of cubic self 2-distance
graphs.
In \cite{c}, Ching   gives some characterizations of 2-distance graphs and finds all graphs X such that $D_2(X)=kP_2$
or $K_m \cup K_n$, where $k \geqslant 2$ is an integer, $P_2$ is the path of order $2$, and $K_m$ is the complete graph of order $m \geqslant 1$. 
In \cite{k} Khormali gives some conditions for connectivity $D_k(G)$. For $D_2(G)$, he proves that if $G$ has no odd cycle, then $D_2(G)$ is disconnected.
In \cite{jm}, the authors characterized all graphs with connected 2-distance graph. Also, for graphs with diameter 2, they proved that $D_2(G)$ is connected if and only if $G$ has no spanning complete bipartite subgraphs.

In this paper, for graphs $G$ with diameter 2, we show that $\diam(D_2(G))$ can be any integer $t\geqslant2$. For graphs $G$ with $\diam(G)\geqslant3$, we prove that $\frac{1}{2}\diam(G)\leqslant \diam(D_2(G))$ and this inequality is sharp. Also, for $\diam(G)=3$ we prove that $\diam(D_2(G))\leqslant5$ and this inequality is sharp.

\section{Main results}  \label{sec2}

In this section, we are going to discuss the diameters of 2-distance graphs. If $G$ is disconnected, then $D_2(G)$ is disconnected too, thus we have to consider  the connected graphs. We are going to find a relation between $\diam(G)$  and $\diam(D_2(G))$, and for this, we have to partition all connected graphs in two distinct partites, the graphs with diameter 2 and graphs with diameter greater than 2.

\subsection{Graphs with diameter 2}
The following observation shows that there is no relation between $\diam(G)$  and $\diam(D_2(G))$.

\begin{observation}\label{obs21}
For any integer $n\geqslant2$ there are some graphs $G$ with 
$\diam(G)=2$ and $\diam(D_2(G))=n$.
\end{observation}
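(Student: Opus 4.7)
My plan is to first reduce the question to a statement about graph complements. The key observation is that if $\diam(G) = 2$, then every pair of distinct vertices has distance either $1$ or $2$ in $G$, so two distinct vertices are adjacent in $D_2(G)$ if and only if they are non-adjacent in $G$. In other words, $D_2(G) = \overline{G}$ whenever $\diam(G) = 2$. Hence the observation reduces to producing, for each $n \geq 2$, a graph $G$ with $\diam(G) = 2$ and $\diam(\overline{G}) = n$.

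For the construction I would take $G_n := \overline{C_{2n+1}}$, the complement of the cycle on $2n+1$ vertices. With this choice the reduction gives $D_2(G_n) = \overline{G_n} = C_{2n+1}$, and $\diam(C_{2n+1}) = n$ as required. What remains is the verification that $\diam(G_n) = 2$.

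To check $\diam(G_n) = 2$, label the vertices $v_0, \dots, v_{2n}$ cyclically. Two distinct vertices fail to be adjacent in $G_n$ precisely when they are adjacent in $C_{2n+1}$, so non-adjacent pairs in $G_n$ have the form $\{v_i, v_{i+1}\}$ (indices mod $2n+1$). A common neighbor of such a pair in $G_n$ is any $v_k$ with $k \not\equiv i-1, i, i+1, i+2 \pmod{2n+1}$. Since $2n+1 \geq 5$ when $n \geq 2$, at least one such $k$ exists, giving $\diam(G_n) \leq 2$; and since $G_n$ actually contains non-adjacent pairs for $n \geq 2$, the diameter equals $2$.

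The argument is short, with no real obstacle beyond the small-case check: the last step fails only at $n = 1$ (where $\overline{C_3}$ is edgeless) but is clean for all $n \geq 2$. In the extreme case $n = 2$ the construction recovers the self-complementary graph $C_5$, which matches the familiar fact that $\diam(C_5) = \diam(D_2(C_5)) = 2$.
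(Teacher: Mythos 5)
Your proof is correct, and it leans on the same key idea the paper uses, namely that a graph of diameter $2$ satisfies $D_2(G)=\overline{G}$, so that one only needs a graph $H$ of diameter $n$ whose complement has diameter $2$. The difference lies in the choice of $H$: the paper takes $H=P_{n+1}$, i.e.\ $G_n=P_{n+1}^C$, which works only for $n\geqslant4$ (for $n=3$ the complement of $P_4$ is again $P_4$, of diameter $3$, and for $n=2$ the complement of $P_3$ is disconnected), so the paper must supply separate ad hoc examples $G_2=C_5$ and a special $7$-vertex graph $G_3$. Your choice $G_n=\overline{C_{2n+1}}$ is uniform over all $n\geqslant2$: the common-neighbour count $2n+1-4\geqslant1$ shows $\diam(\overline{C_{2n+1}})=2$, and $\diam(C_{2n+1})=n$ finishes the argument, with the single construction specializing to the paper's $C_5$ example at $n=2$. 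So your route buys a case-free statement and proof, at the negligible cost of working with odd cycles instead of paths; the paper's version is equally elementary but needs the small-$n$ patch.
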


\begin{example}
For  $n=2,3,4$ the following graphs $G_2, G_3,G_4$  satisfies the conditions of Observation \ref{obs21}.
\begin{center}
\begin{tikzpicture}
\node at (0,0){
\begin{tikzpicture}
\draw (0,0)--(1,0)--(1.3,0.8)--(0.5,1.4)--(-.3,.8)--(0,0);
\filldraw (.5,1.4) circle(2pt);\filldraw (1,0) circle(2pt);
\filldraw (0,0) circle(2pt);\filldraw (1.3,0.8) circle(2pt);
\filldraw (-.3,0.8) circle(2pt);
\node at (.5,1.7){$D_2(G_2)=C_5$};
\node at (.5,-.5){$\diam(D_2(G_2))=2$};

\end{tikzpicture}
};
\node at (0,3){
\begin{tikzpicture}
\draw (0,0)--(1,0)--(1.3,0.8)--(0.5,1.4)--(-.3,.8)--(0,0);
\filldraw (.5,1.4) circle(2pt);\filldraw (1,0) circle(2pt);
\filldraw (0,0) circle(2pt);\filldraw (1.3,0.8) circle(2pt);
\filldraw (-.3,0.8) circle(2pt);
\node at (.5,1.7){$G_2=C_5$};
\node at (.5,-.5){$\diam(G_2)=2$};
\end{tikzpicture}
};
%\node at (0,-4){
%\draw(-2,-1.5)--(12,-1.5);

\node at (5,0){
\begin{tikzpicture}
\draw (-1,0)--(0,0)--(1,0)--(2,0)--(1,0)--(1.3,0.8)--(0.5,1.4)--(-.3,.8)--(0,0);
\filldraw (.5,1.4) circle(2pt);\filldraw (2,0) circle(2pt);\filldraw (1,0) circle(2pt);
\filldraw (-1,0) circle(2pt);\filldraw (0,0) circle(2pt);\filldraw (1.3,0.8) circle(2pt);
\filldraw (-.3,0.8) circle(2pt);
\node at (.5,1.7){$D_2(G_3)$};
\node at (.5,-.5){$\diam(D_2(G_3))=3$};

\end{tikzpicture}
};
\node at (5,3){
\begin{tikzpicture}
\draw (1.3,0.8)--(0.5,1.4)--(-.3,.8)--(0,0)--(1,0)--(1.3,0.8)--(.8,.6)--(.2,0.6)--(-.3,.8);
\draw (.2,.6)--(0,0)--(.8,.6)--(1,0)--(.2,0.6)--(.5,1.4)--(.8,.6);
\filldraw (.5,1.4) circle(2pt);\filldraw (.8,0.6) circle(2pt);\filldraw (1,0) circle(2pt);
\filldraw (.2,0.6) circle(2pt);\filldraw (0,0) circle(2pt);\filldraw (1.3,0.8) circle(2pt);
\filldraw (-.3,0.8) circle(2pt);
\node at (.5,1.7){$G_3$};
\node at (.5,-.5){$\diam(G_3)=2$};
\end{tikzpicture}
};
\node at (10,0){
\begin{tikzpicture}
\draw (1,0)--(1.3,0.8)--(0.5,1.4)--(-.3,.8)--(0,0);
\filldraw (.5,1.4) circle(2pt);\filldraw (1,0) circle(2pt);
\filldraw (0,0) circle(2pt);\filldraw (1.3,0.8) circle(2pt);
\filldraw (-.3,0.8) circle(2pt);
\node at (.5,1.7){$D_2(G_4)$};
\node at (.5,-.5){$\diam(D_2(G_4))=4$};

\end{tikzpicture}
};
\node at (10,3){
\begin{tikzpicture}
\draw (-.3,.8)--(1.3,0.8)--(0.5,1.4)--(-.3,.8)--(0,0)--(1,0)--(1.3,0.8);
\filldraw (.5,1.4) circle(2pt);\filldraw (1,0) circle(2pt);
\filldraw (0,0) circle(2pt);\filldraw (1.3,0.8) circle(2pt);
\filldraw (-.3,0.8) circle(2pt);
\node at (.5,1.7){$G_4=P_5$};
\node at (.5,-.5){$\diam(G_4)=2$};
\end{tikzpicture}
};
\end{tikzpicture}

\end{center}

\end{example}
\begin{example}

 If $n\geqslant4$ and the graph $G_{n}=P_{n+1}^C$ satisfies the conditions of Observation \ref{obs21}, since $\diam(G)=2$, $D_2(G)=(P_{n+1}^C)^C=P_{n+1}$ and so $\diam(D_2(G))=n$.
\end{example}

\subsection{A lower bound for diameter of 2-distance graphs}

Here, we obtain a lower bound for the diameter of 2-distance graphs of the graphs $G$ with $\diam(G)\geqslant3$.

\begin{lemma}\label{lem1}
Let $G$ be a graph and $D_2(G)$ be connected. For any $u,v\in V(G)$ :
 $$\diss(u,v)\geqslant \left\lceil \frac{1}{2}\dis(u,v)\right\rceil$$
\end{lemma}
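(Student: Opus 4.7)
The plan is to exploit the fact that every edge in $D_2(G)$ corresponds to a pair of vertices at distance exactly $2$ in $G$, so a path in $D_2(G)$ can be ``unfolded'' into a walk in $G$ that is exactly twice as long, after which the triangle inequality in $G$ finishes the job.

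More concretely, let $k=\diss(u,v)$ and fix a shortest $u$--$v$ path in $D_2(G)$, say $u=w_0,w_1,\ldots,w_k=v$. By the definition of $D_2(G)$, consecutive vertices in this path satisfy $\dis(w_i,w_{i+1})=2$ for each $i=0,1,\ldots,k-1$. Concatenating shortest $w_i$--$w_{i+1}$ paths in $G$ yields a $u$--$v$ walk of length $2k$, so the triangle inequality in $G$ gives
\[
\dis(u,v)\;\leq\;\sum_{i=0}^{k-1}\dis(w_i,w_{i+1})\;=\;2k\;=\;2\,\diss(u,v).
\]
Dividing by $2$ and using that $\diss(u,v)$ is an integer to replace $\tfrac{1}{2}\dis(u,v)$ by its ceiling yields the claim.

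There is essentially no obstacle: the only point that needs care is the observation that $\dis(w_i,w_{i+1})=2$ (not merely $\leq 2$), which is immediate from the definition of $D_2(G)$ since $w_iw_{i+1}$ is an edge of $D_2(G)$ rather than a loop. The connectedness hypothesis on $D_2(G)$ is used only to guarantee that $\diss(u,v)$ is finite so that the inequality makes sense.
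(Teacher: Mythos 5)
Your proposal is correct and follows essentially the same argument as the paper: unfold a shortest path of length $k$ in $D_2(G)$ into a $u$--$v$ walk of length $2k$ in $G$ (since each edge of $D_2(G)$ corresponds to a pair at distance exactly $2$), conclude $\dis(u,v)\leqslant 2k$, and use integrality of $\diss(u,v)$ to pass to the ceiling. No gaps to report.
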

\begin{proof}
Let  $\diss(u,v)=k$ and $u=x_0x_1x_2\cdots x_{k}=v$ be a path in $D_2(G)$. Now, there is a walk $u=x_0y_1x_1y_2x_2\cdots y_kx_{k}=v$ in $G$ with length $2k$. Thus $2k\geqslant \dis(u,v)$. Consequently $\diss(u,v)\geqslant \left\lceil \frac{1}{2}\dis(u,v)\right\rceil$.
\end{proof}

An immediate result from Lemma \ref{lem1} is the following.
\begin{theorem}\label{the23}
Suppose that $G$ be a graph and $D_2(G)$ be connected. Then $\diam(D_2(G))\geqslant \left\lceil \frac{1}{2}\diam(G)\right\rceil$
\end{theorem}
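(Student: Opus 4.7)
The plan is to deduce Theorem \ref{the23} directly from Lemma \ref{lem1} by choosing $u$ and $v$ to be a diameter-realizing pair in $G$.

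First, since $G$ is a (simple) graph and $\diam(G)$ is finite (implicitly, since otherwise $D_2(G)$ could not be connected in the ways being discussed), I can pick vertices $u,v\in V(G)$ with $\dis(u,v)=\diam(G)$. Note that these same vertices lie in $V(D_2(G))$ because $V(D_2(G))=V(G)$ by definition, so it makes sense to talk about $\diss(u,v)$.

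Next, I apply Lemma \ref{lem1} to this particular pair to obtain
\[
\diss(u,v)\;\geqslant\;\left\lceil \tfrac{1}{2}\dis(u,v)\right\rceil \;=\;\left\lceil \tfrac{1}{2}\diam(G)\right\rceil.
\]
Finally, by the very definition of the diameter of $D_2(G)$ as the maximum of $\diss$ over all pairs of vertices, $\diam(D_2(G))\geqslant \diss(u,v)$, which chains with the inequality above to give the claim.

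There is no real obstacle here: the whole point is that Lemma \ref{lem1} is a pointwise statement about every pair of vertices, and the theorem is just the specialization to a pair achieving $\diam(G)$, combined with the trivial lower bound on $\diam(D_2(G))$. The only subtlety worth flagging is making sure $u$ and $v$ are distinct (so that $\diss(u,v)$ is a positive integer bounded by $\diam(D_2(G))$), but this is automatic once $\diam(G)\geqslant 1$.
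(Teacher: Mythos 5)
Your proposal is correct and is exactly the argument the paper intends: the paper states the theorem as an immediate consequence of Lemma \ref{lem1}, obtained by applying that lemma to a pair of vertices realizing $\diam(G)$ and using the trivial bound $\diam(D_2(G))\geqslant \diss(u,v)$. Nothing further is needed.
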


The following examples show that the inequality of the above theorem is sharp.

\begin{example}\label{exa26}
In the  following graph $G$, we have $\diam(G)=3$ and $\diam(D_2(G))=2$.
\begin{center}

\begin{tikzpicture}

\node at(0,0){
\begin{tikzpicture}

\draw(-.8,0)--(0,0)--(.6,.7)--(1.3,.4)--(1.3,-.4)--(.6,-.7)--(0,0);
\filldraw (-.8,0) circle(2pt);\filldraw (0,0) circle(2pt);\filldraw (.6,.7) circle(2pt);
\filldraw (.6,-.7) circle(2pt);\filldraw (1.3,.4) circle(2pt);\filldraw (1.3,-.4) circle(2pt);
\node at(.6,-1.2){$G$};
\end{tikzpicture}};
\node at(5,0){
\begin{tikzpicture}

\draw(1.3,.4)--(0,0)--(.6,.7)--(1.3,.4)--(1.3,-.4)--(.6,-.7)--(0,0);
\filldraw (.65,.2) circle(2pt);\filldraw (0,0) circle(2pt);\filldraw (.6,.7) circle(2pt);
\filldraw (.6,-.7) circle(2pt);\filldraw (1.3,.4) circle(2pt);\filldraw (1.3,-.4) circle(2pt);
\node at(.6,-1.2){$D_2(G)$};
\end{tikzpicture}};
\end{tikzpicture}
\end{center}

\end{example}

\begin{example}\label{exa27}
Let $G_{t,3}$ be the  following graph. 

\begin{center}
\begin{tikzpicture}

\draw (-2.1,0)--(-2.9,0);\draw (13.6,0)--(12,0);
\filldraw (-2.9,0) circle(2pt);\filldraw (12.8,0) circle(2pt);\filldraw (13.6,0) circle(2pt);
\node at (-2.9,-.25) {$u$};\node at (12.8,-.25) {$v$};\node at (13.6,-.25) {$w$};
\node at (0,0){
\begin{tikzpicture}
\draw (0,0) circle(20mm);
\draw (0,-2)--(0,2);
\filldraw (-2,0) circle(2pt);\node at (-1.7,0) {$b_0$};
\filldraw (2,0) circle(2pt);
\filldraw (1.41,1.41) circle(2pt);\node at (1.6,1.6) {$a_{13}$};
\filldraw (1.41,-1.41) circle(2pt);\node at (1.6,-1.6) {$a_{14}$};
\filldraw (-1.41,1.41) circle(2pt);\node at (-1.6,1.6) {$a_{11}$};
\filldraw (-1.41,-1.41) circle(2pt);\node at (-1.6,-1.6) {$a_{16}$};
\filldraw (0,2) circle(2pt);\node at (0,2.3) {$a_{12}$};
\filldraw (0,-2) circle(2pt);\node at (0,-2.3) {$a_{15}$};

%\node at (0,-3){$\diam(G)=4$};
\end{tikzpicture}
};
\node at (3.75,0){
\begin{tikzpicture}
\draw (0,0) circle(20mm);
\draw (0,-2)--(0,2);
\filldraw (-2,0) circle(2pt);\node at (-2.3,0) {$b_1$};
\filldraw (2,0) circle(2pt);\node at (1.7,0) {$b_2$};
\filldraw (1.41,1.41) circle(2pt);\node at (1.6,1.6) {$a_{23}$};
\filldraw (1.41,-1.41) circle(2pt);\node at (1.6,-1.6) {$a_{24}$};
\filldraw (-1.41,1.41) circle(2pt);\node at (-1.6,1.6) {$a_{21}$};
\filldraw (-1.41,-1.41) circle(2pt);\node at (-1.6,-1.6) {$a_{26}$};
\filldraw (0,2) circle(2pt);\node at (0,2.3) {$a_{22}$};
\filldraw (0,-2) circle(2pt);\node at (0,-2.3) {$a_{25}$};

%\node at (0,-3){$\diam(G)=4$};
\end{tikzpicture}
};
\node at (7,0){$\cdots$};
\node at (10,0){
\begin{tikzpicture}
\draw (0,0) circle(20mm);

\draw (0,-2)--(0,2);
\filldraw (-2,0) circle(2pt);\node at (-1.5,0) {$b_{t-1}$};
\filldraw (2,0) circle(2pt);\node at (1.7,0) {$b_{t}$};
\filldraw (1.41,1.41) circle(2pt);\node at (1.6,1.6) {$a_{t3}$};
\filldraw (1.41,-1.41) circle(2pt);\node at (1.6,-1.6) {$a_{t4}$};
\filldraw (-1.41,1.41) circle(2pt);\node at (-1.6,1.6) {$a_{t1}$};
\filldraw (-1.41,-1.41) circle(2pt);\node at (-1.6,-1.6) {$a_{t6}$};
\filldraw (0,2) circle(2pt);\node at (0,2.3) {$a_{t2}$};
\filldraw (0,-2) circle(2pt);\node at (0,-2.3) {$a_{t5}$};

%\node at (0,-3){$\diam(G)=4$};
\end{tikzpicture}
};

\end{tikzpicture}
\end{center}

%*********************************************
%
%*********************************************************

Since $b_0,b_1, \cdots, b_{t}$ are cut-vertices, $\diam(G_{t,3})=4t+3$.
Also,  $D_2(G_{t,3})$ is the following.

\begin{center}
\begin{tikzpicture}[scale=1]

\draw (-1.3,0)--(-.47,.705);\draw (-1.3,0)--(-.47,-.705);\draw (13.7,0)--(14.5,0);
\draw (13.3,0)--(12.5,.705);\draw (13.3,0)--(12.5,-.705);
\filldraw (-1.3,0) circle(2pt);\filldraw (13.3,0) circle(2pt);\filldraw (14.5,0) circle(2pt);
\node at (-1.3,-.25) {$u$};\node at (13.3,-.25) {$v$};\node at (14.5,-.25) {$w$};

\node at (0,0){
\begin{tikzpicture}
\draw (0,0) circle(20mm);
\draw (0,0) circle(10mm);
\draw (-0.705,0.705)--(0,2);\draw (0.705,0.705)--(0,2);
\draw (-0.705,-0.705)--(0,-2);\draw (0.705,-0.705)--(0,-2);
\filldraw (0.705,0.705) circle(2pt);\node at (.95,.95) {$a_{14}$};
\filldraw(-0.705,0.705) circle(2pt);\node at (.95,-.95) {$a_{13}$};
\filldraw (0.705,-0.705) circle(2pt);\node at (-.95,.95) {$a_{16}$};
\filldraw (-0.705,-0.705) circle(2pt);\node at (-.95,-.95) {$a_{11}$};
\filldraw (-2,0) circle(2pt);\node at (-2.3,0) {$b_0$};
\filldraw (2,0) circle(2pt);

\filldraw (0,2) circle(2pt);\node at (0,2.3) {$a_{12}$};
\filldraw (0,-2) circle(2pt);\node at (0,-2.3) {$a_{15}$};

%\node at (0,-3){$\diam(G)=4$};
\end{tikzpicture}
};
\node at (4,0){
\begin{tikzpicture}
\draw (0,0) circle(20mm);
\draw (0,0) circle(10mm);
\draw (-0.705,0.705)--(0,2);\draw (0.705,0.705)--(0,2);
\draw (-0.705,-0.705)--(0,-2);\draw (0.705,-0.705)--(0,-2);
\filldraw (0.705,0.705) circle(2pt);\node at (.95,.95) {$a_{24}$};
\filldraw(-0.705,0.705) circle(2pt);\node at (.95,-.95) {$a_{23}$};
\filldraw (0.705,-0.705) circle(2pt);\node at (-.95,.95) {$a_{26}$};
\filldraw (-0.705,-0.705) circle(2pt);\node at (-.95,-.95) {$a_{21}$};

\filldraw (-2,0) circle(2pt);\node at (-2.3,0) {$b_1$};
\filldraw (2,0) circle(2pt);\node at (1.7,0) {$b_2$};

\filldraw (0,2) circle(2pt);\node at (0,2.3) {$a_{22}$};
\filldraw (0,-2) circle(2pt);\node at (0,-2.3) {$a_{25}$};

%\node at (0,-3){$\diam(G)=4$};
\end{tikzpicture}
};
\node at (8,0){$\cdots$};
\node at (12,0){
\begin{tikzpicture}
\draw (0,0) circle(20mm);
\draw (0,0) circle(10mm);
\draw (-0.705,0.705)--(0,2);\draw (0.705,0.705)--(0,2);
\draw (-0.705,-0.705)--(0,-2);\draw (0.705,-0.705)--(0,-2);
\filldraw (0.705,0.705) circle(2pt);\node at (.95,.95) {$a_{t4}$};
\filldraw(-0.705,0.705) circle(2pt);\node at (.95,-.95) {$a_{t3}$};
\filldraw (0.705,-0.705) circle(2pt);\node at (-.95,.95) {$a_{t6}$};
\filldraw (-0.705,-0.705) circle(2pt);\node at (-.95,-.95) {$a_{t1}$};
\filldraw (-2,0) circle(2pt);\node at (-1.5,0) {$b_{t-1}$};
\filldraw (2,0) circle(2pt);\node at (2.2,.25) {$b_{t}$};

\filldraw (0,2) circle(2pt);\node at (0,2.3) {$a_{t2}$};
\filldraw (0,-2) circle(2pt);\node at (0,-2.3) {$a_{t5}$};

%\node at (0,-3){$\diam(G)=4$};
\end{tikzpicture}
};

\draw (0.905,0.705)--(3.495,.705);\draw (0.905,-0.705)--(3.495,-.705);
\draw (1.0,0.705)to[out=-60,in=160](3.495,-.705);
\draw (0.905,-0.705)to[out=20,in=-120](3.55,.705);

\draw (5.0,0.705)to[out=-60,in=160](7.495,-.705);
\draw (4.905,-0.705)to[out=20,in=-120](7.55,.705);
\draw (5.0,0.705)--(7.495,.705);
\draw (4.905,-0.705)--(7.55,-.705);

\draw (8.5,0.705)to[out=-60,in=160](11.0,-.705);
\draw (8.5,-0.705)to[out=20,in=-120](11,.705);
\draw (8.5,0.705)--(11.0,.705);
\draw (8.5,-0.705)--(11,-.705);

\filldraw [white](7,1)rectangle(7.7,-1);
\filldraw [white](8.3,1)rectangle(8.9,-1);

\end{tikzpicture}
\end{center}

It is easily seen that $\diss(u,w)=2t+2$ and then  $\diam(D_2(G_{t,3}))=2t+2$.

Also, By setting $G_{t,2}=G_{t,3}-w$, $G_{t,1}=G_{t,3}-\{v,w\}$ and $G_{t,0}=G_{t,3}-\{u,v,w\}$, we have $\diam(G_{t,i})=4t+i$ and $\diam(D_2(G_{t,i}))=2t+\left\lceil \dfrac{i}{2}\right \rceil$.
\end{example}

\subsection{An upper bound for the diameter of 2-distance graphs}

In this subsection, we obtain an upper bound for the diameter of 2-distance graphs of the graphs $G$ with $\diam(G)=3$.

\begin{theorem}\label{the26}
If $\diam(G)=3$ then $\diam(D_2(G))\leqslant 5$.
\end{theorem}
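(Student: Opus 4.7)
The plan is to fix arbitrary $u,v \in V(G)$ and, by case analysis on $\dis_G(u,v) \in \{1,2,3\}$, produce a $D_2$-walk of length at most $5$ between them. The case $\dis_G(u,v) = 2$ is immediate: by definition $u$ and $v$ are $D_2$-adjacent, so $\diss(u,v) = 1$.

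The remaining two cases I will deduce from a single auxiliary claim: \emph{for every $G$-edge $xy$, $\diss(x,y) \le 3$.} Granted the claim, the case $\dis_G(u,v)=1$ is exactly its conclusion. For the case $\dis_G(u,v)=3$, pick a $G$-geodesic $u,a,b,v$ of length $3$; minimality forces $\dis_G(u,b) = \dis_G(a,v) = 2$, so $u$ is $D_2$-adjacent to $b$ and $a$ is $D_2$-adjacent to $v$. Applying the claim to the edge $ab$ gives $\diss(a,b) \le 3$, and concatenation yields a $D_2$-walk $u \to b \to \cdots \to a \to v$ of length $\le 1 + 3 + 1 = 5$.

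To prove the auxiliary claim, write $N_2(z) = \{w : \dis_G(z,w) = 2\}$, which is exactly the set of $D_2$-neighbours of $z$. Fix an edge $xy \in E(G)$; since $D_2(G)$ is connected, both $N_2(x)$ and $N_2(y)$ are nonempty. If $N_2(x) \cap N_2(y) \neq \emptyset$, any common vertex $w$ gives the $D_2$-path $x\,w\,y$ of length $2$. Otherwise every $w \in N_2(x)$ satisfies $\dis_G(w,y) \in \{1,3\}$ (the value $0$ is ruled out because it would force $\dis_G(x,y) = 2$), and symmetrically for $N_2(y)$. I plan to split further. If some $w \in N_2(x)$ has $\dis_G(w,y) = 3$, a $G$-geodesic $w,p,q,y$ yields $w \sim q$ and $p \sim y$ in $D_2(G)$ (from $\dis_G(w,q) = \dis_G(p,y) = 2$), and a careful analysis of the neighbourhood around the edge $pq$ links $q$ to $y$ in one further $D_2$-step, giving a length-$3$ $D_2$-walk from $x$. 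If instead $N_2(x) \subseteq N_1(y)$ and $N_2(y) \subseteq N_1(x)$, pick $w \in N_2(x)$ and $w' \in N_2(y)$; then $\dis_G(w,w') = 2$ delivers the $D_2$-path $x\,w\,w'\,y$ of length $3$, while the values $1$ or $3$ are handled by varying the choices of $w,w'$ or by exploiting a vertex realising $\diam(G)=3$.

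The main obstacle is the residual configuration in which every cross-pair $(w,w') \in N_2(x) \times N_2(y)$ has $\dis_G(w,w') \in \{1,3\}$, so no length-$3$ $D_2$-walk from $x$ to $y$ is visible directly from the neighbourhood data. The delicate step is to show that this configuration is incompatible with the joint hypotheses $\diam(G) = 3$ and connectedness of $D_2(G)$; I expect the argument to proceed by locating a vertex $z$ with $\dis_G(z,x) = 3$ or $\dis_G(z,y) = 3$ and tracing a $G$-geodesic through $z$ to supply the missing middle $D_2$-edge. This combinatorial wrangling is the technical heart of the proof, and is where the hypothesis $\diam(G) = 3$ (rather than a larger diameter) will be essential.
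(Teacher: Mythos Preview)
Your central auxiliary claim --- that every $G$-edge $xy$ satisfies $\diss(x,y)\le 3$ --- is false, and the counterexample is the paper's own extremal graph. Take $G$ on six vertices $a,b,c,d,e,f$ with edges $ab,bc,cd,de,ef,fb,ec,fc$ (a path $abcd$ with a triangle $cef$ glued on, plus the edges $fb$ and $ec$). One checks $\diam(G)=3$, and $D_2(G)$ is the path $c\text{--}a\text{--}f\text{--}d\text{--}b\text{--}e$. Now $bc\in E(G)$ but $\diss(b,c)=4$, and more dramatically $ce\in E(G)$ but $\diss(c,e)=5$. So adjacent vertices in $G$ can realise the full $D_2$-diameter; the case $\dis_G(u,v)=1$ is not the easy one but the extremal one, and no bound better than $5$ is available there. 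Consequently your distance-$3$ reduction (bounding $\diss(u,v)$ by $1+\diss(a,b)+1$ along a geodesic $u,a,b,v$) also collapses, since the middle term can be $4$ or $5$.

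The underlying issue is that your decomposition tries to control $\diss$ locally from $\dis_G$, but $D_2$-distance is a global quantity: a single $G$-edge can force a long detour in $D_2(G)$. The paper proceeds in the opposite direction. It assumes a $D_2$-geodesic $a_1a_2\cdots a_7$ of length $6$ and, through a sequence of claims, pins down the $G$-distances among the $a_i$ until the induced subgraph on $\{a_1,\dots,a_7\}$ is forced to be a specific diameter-$2$ graph; then it shows that any further vertex of $G$ (needed because $\diam(G)=3$) must have at least four neighbours among the $a_i$, which produces a short $D_2$-path contradicting $\diss(a_{i_1},a_{i_4})\ge 3$. The work is genuinely global and cannot be replaced by an edge-by-edge bound.
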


\begin{proof}
We prove this by contradiction.
\\
Suppose that $G$ be a graph with $\diam(G)=3$ and $a_1a_2a_3a_4a_5a_6a_7$ is a path with length $6$ in $D_2(G)$.

\textbf{Claim 1:}  $\dis(a_1,a_3)=1$
\begin{proof}
We  consider the following two possible cases.

\textbf{Case 1:}  $\dis(a_1,a_3)=3 , \dis(a_3,a_7)=1$

Let $a_1bca_3$ be a path in $G$. Since $ca_3a_7$ is a path in G, $ \dis(c,a_7)\leqslant2$. If $ \dis(c,a_7)=2$ then $a_1ca_7$ is a path in $D_2(G)$, a contradiction. Thus $ \dis(c,a_7)=1$.
If $ \dis(b,a_7)=2$ then $a_1a_2a_3ba_7$ is a path in $D_2(G)$, a contradiction. Thus $ \dis(b,a_7)=1$.
Consequently, $\dis(a_1,a_7)\leqslant2$. But, $\dis(a_1,a_7)\ne2$ and then $\dis(a_1,a_7)=1$. Thus, $a_1a_7a_3$  is a path in $G$ and $\dis(a_1,a_3)\leqslant2$, a contradiction.

\textbf{Case 2:}  $\dis(a_1,a_3)=3 , \dis(a_3,a_7)=3$.

Let $a_1bca_3$ and $a_3dea_7$ be  paths in $G$. 
Since, $\dis(a_1,a_3)=3$, $b\ne d$, similarly $c\ne e$. Then we have the following three subcases.

\textbf{Subcase 1:}  $c= d$.

Then $a_1ca_7$ is a path in $D_2(G)$, a contradiction.

\textbf{Subcase 2:} $c\ne d$ and $b=e$.
\begin{center}
\begin{tikzpicture}
\foreach \x in {1,2,...,7} \filldraw (\x,0) circle(2pt);
\draw [dashed](1,0)--(7,0);
\draw (1,0)--(3,2)--(7,0);
\draw (3,2)--(2.5,1)--(3,0)--(3.5,1)--(3,2);
%\draw(2.5,1)--(3.5,1);
\filldraw (3,2) circle(2pt);
  \filldraw (2.5,1) circle(2pt); \filldraw (3.5,1) circle(2pt);
\node at (1,-.3){$a_1$};\node at (2,-.3){$a_2$};\node at (3,-.3){$a_3$};
\node at (4,-.3){$a_4$};\node at (5,-.3){$a_5$};\node at (6,-.3){$a_6$};
\node at (7,-.3){$a_7$};\node at (3,2.3){$b=e$};\node at (2.5,1.3){$c$};
\node at (3.5,1.3){$d$};
\end{tikzpicture}
\end{center}

If $\dis(a_1,d)=2$ then $a_1da_7$ is a path in $D_2(G)$, a contradiction. Thus $\dis(a_1,d)=1$. Consequently, $ \dis(a_1,a_3)\leqslant2$, a contradiction.

\textbf{Subcase 3:} $c\ne d$ and $b\ne e$.
\\
If $\dis(c,d)=2$, then $a_1cda_7$ is a path in $D_2(G)$, a contradiction. Then, we have the following.

\begin{center}
\begin{tikzpicture}
\foreach \x in {1,2,...,7} \filldraw (\x,0) circle(2pt);
\draw [dashed](1,0)--(7,0);
\draw (1,0)--(1.5,1)--(2.5,1)--(3,0)--(3.5,1)--(6.5,1)--(7,0);
\draw(2.5,1)--(3.5,1);
 \filldraw (1.5,1) circle(2pt); \filldraw (2.5,1) circle(2pt); \filldraw (3.5,1) circle(2pt); \filldraw (6.5,1) circle(2pt);
\node at (1,-.3){$a_1$};\node at (2,-.3){$a_2$};\node at (3,-.3){$a_3$};
\node at (4,-.3){$a_4$};\node at (5,-.3){$a_5$};\node at (6,-.3){$a_6$};
\node at (7,-.3){$a_7$};\node at (1.5,1.3){$b$};\node at (2.5,1.3){$c$};
\node at (3.5,1.3){$d$};\node at (6.5,1.3){$e$};
\end{tikzpicture}
\end{center}

If $ \dis(b,d)=2$ then $a_1a_2a_3bda_7$ is a path in $D_2(G)$, a contradiction. Then $ \dis(b,d)=1$. 
If $ \dis(a_1,d)=2$ then $a_1da_7$ is a path in $D_2(G)$, a contradiction. Then $ \dis(a_1,d)=1$. 
Consequently, $ \dis(a_1,a_3)\leqslant2$, a contradiction. 
\end{proof}  %Claim 1

Therefore, $\dis(a_1,a_3)=1$ and by symmetric $\dis(a_5,a_7)=1$

\textbf{Claim 2:}  $\dis(a_3,a_5)=1$.
\begin{proof}
By contradiction, let $\dis(a_3,a_5)=3$ and $a_3bca_5$ be a path in $G$. Since $\dis(b,a_5)=\dis(c,a_3)=2$, $a_1\ne b,c$. Similarly, $a_7\ne b,c$. Noe, we have the following

\begin{center}
\begin{tikzpicture}
\foreach \x in {1,2,...,7} \filldraw (\x,0) circle(2pt);
\draw [dashed](1,0)--(7,0);
\draw (3,0)--(3.5,1)--(4.5,1)--(5,0);
 \draw (1,0) arc (150:30:1.15);
 \draw (5,0) arc (150:30:1.15);
 \filldraw (3.5,1) circle(2pt); \filldraw (4.5,1) circle(2pt);
\node at (1,-.3){$a_1$};\node at (2,-.3){$a_2$};\node at (3,-.3){$a_3$};
\node at (4,-.3){$a_4$};\node at (5,-.3){$a_5$};\node at (6,-.3){$a_6$};
\node at (7,-.3){$a_7$};\node at (3.5,1.3){$b$};\node at (4.5,1.3){$c$};
\end{tikzpicture}
\end{center}

If $\dis(a_1,b)=2$ then $a_1ba_5$ is a path in $D_2(G)$, a contradiction. Thus $\dis(a_1,b)=1$ and by symmetry $\dis(c,a_7)=1$.

If $\dis(a_1,c)=1$ then $\dis(a_1,a_5)\leqslant2$ and so $\dis(a_1,a_5)=1$. Consequently $\dis(a_3,a_5)\leqslant2$, a contradiction. Thus $\dis(a_1,c)=2$ and by symmetry $\dis(b,a_7)=2$.

\begin{center}
\begin{tikzpicture}
\foreach \x in {1,2,...,7} \filldraw (\x,0) circle(2pt);
\draw [dashed](1,0)--(7,0);
\draw (3,0)--(3.5,1)--(4.5,1)--(5,0);
 \draw (1,0) arc (150:30:1.15); \draw (1,0) arc (157:65:1.9);
 \draw (5,0) arc (150:30:1.15);\draw (4.5,1) arc (115:23:1.9);
 \filldraw (3.5,1) circle(2pt); \filldraw (4.5,1) circle(2pt);
\node at (1,-.3){$a_1$};\node at (2,-.3){$a_2$};\node at (3,-.3){$a_3$};
\node at (4,-.3){$a_4$};\node at (5,-.3){$a_5$};\node at (6,-.3){$a_6$};
\node at (7,-.3){$a_7$};\node at (3.5,1.3){$b$};\node at (4.5,1.3){$c$};
\end{tikzpicture}
\end{center}

Since $\dis(a_3,a_4)=\dis(a_4,a_5)=2$, $a_4\ne b,c$.

 If $\dis(b,a_4)=2$ then $a_7ba_4$ is a path in $D_2(G)$, a contradiction. Thus $\dis(b,a_4)=1 \text{ or }3$.
 Let $\dis(b,a_4)=3$ and $ba_3ua_4$ be a path in $G$. Since $\dis(b,a_4)=3$, $u\ne a_1,b$. If $\dis(a_1,u)=2$ then $a_1uba_7$ is a path in $D_2(G)$, a contradiction. Thus $\dis(a_1,u)=1$, $\dis(a_1,a_4)\leqslant2$ and so $\dis(a_1,a_4)=1$. Consequently,  $\dis(b,a_4)\leqslant2$, a contradiction. Therefore, $\dis(b,a_4)=1$ and by symmetry $\dis(c,a_4)=1$.

\begin{center}
\begin{tikzpicture}
\foreach \x in {1,2,...,7} \filldraw (\x,0) circle(2pt);
\draw [dashed](1,0)--(7,0);
\draw (3,0)--(3.5,1)--(4.5,1)--(5,0);
\draw (3.5,1)--(4,0)--(4.5,1);
 \draw (1,0) arc (150:30:1.15); \draw (1,0) arc (157:65:1.9);
 \draw (5,0) arc (150:30:1.15);\draw (4.5,1) arc (115:23:1.9);
 \filldraw (3.5,1) circle(2pt); \filldraw (4.5,1) circle(2pt);
\node at (1,-.3){$a_1$};\node at (2,-.3){$a_2$};\node at (3,-.3){$a_3$};
\node at (4,-.3){$a_4$};\node at (5,-.3){$a_5$};\node at (6,-.3){$a_6$};
\node at (7,-.3){$a_7$};\node at (3.5,1.3){$b$};\node at (4.5,1.3){$c$};
\end{tikzpicture}
\end{center}

Now, we have $\dis(a_1,a_4)\leqslant2$ and then $\dis(a_1,a_4)=1$. By symmetry $\dis(a_4,a_7)=1$  and then $\dis(a_1,a_7)\leqslant2$. Thus $\dis(a_1,a_7)=1$ and so $\dis(a_1,a_5)=1$ which shows $\dis(a_3,a_5)\leqslant2$, a contradiction.
\end{proof}  %Claim 2

Therefore, $\dis(a_3,a_5)=1$ and then $\dis(a_1,a_5)=\dis(a_3,a_7)=\dis(a_1,a_7)=1$.

\begin{center}
\begin{tikzpicture}
\foreach \x in {1,2,...,7} \filldraw (\x,0) circle(2pt);
\draw [dashed](1,0)--(7,0);
 \draw (1,0) arc (150:30:1.15); \draw (1,0) arc (145:35:2.45);
 \draw (3,0) arc (150:30:1.15); 
 \draw (5,0) arc (150:30:1.15); \draw (3,0) arc (145:35:2.45);
\draw (1,0) arc (145:35:3.67);
\node at (1,-.3){$a_1$};\node at (2,-.3){$a_2$};\node at (3,-.3){$a_3$};
\node at (4,-.3){$a_4$};\node at (5,-.3){$a_5$};\node at (6,-.3){$a_6$};
\node at (7,-.3){$a_7$};
\end{tikzpicture}
\end{center}

\textbf{Claim 3:}  $\dis(a_2,a_4)=1$

\begin{proof}
By contradiction, let $\dis(a_2,a_4)=3$. 

\begin{center}
\begin{tikzpicture}
\foreach \x in {1,2,...,7} \filldraw (\x,0) circle(2pt);
\draw [dashed](1,0)--(7,0);
\draw (2,0)--(2.5,-1)--(3.5,-1)--(4,0)--(4.5,-1)--(5,0);

 \draw (1,0) arc (150:30:1.15); \draw (1,0) arc (145:35:2.45);
 \draw (3,0) arc (150:30:1.15); 
 \draw (5,0) arc (150:30:1.15); \draw (3,0) arc (145:35:2.45);
\draw (1,0) arc (145:35:3.67);
\filldraw (2.5,-1) circle(2pt);\filldraw (3.5,-1) circle(2pt);\filldraw (4.5,-1) circle(2pt);
\node at (1,-.3){$a_1$};\node at (2,.3){$a_2$};\node at (3,-.3){$a_3$};
\node at (4,.3){$a_4$};\node at (5.15,-.3){$a_5$};\node at (6,.3){$a_6$};
\node at (7,-.3){$a_7$};
\node at (2.25,-1){$b$};\node at (3.75,-1){$c$};\node at (4.75,-1){$u$};
\end{tikzpicture}
\end{center}

If $\dis(u,a_7)=1$ then $\dis(a_4,a_7)\leqslant2$. Thus $\dis(a_4,a_7)=1$. If $\dis(c,a_7)=2$ then $a_2ca_7$ is a path in $D_2(G)$, a contradiction, and so $\dis(c,a_7)=1$.
If $\dis(b,a_7)=2$ then $a_4ba_7$  is a path in $D_2(G)$, a contradiction, and so $\dis(b,a_7)=1$.
Thus $\dis(a_2,a_7)\leqslant2$ and $\dis(a_2,a_7)=1$. Which shows $\dis(a_2,a_4)\leqslant2$, a contradiction, and so Then $\dis(u,a_7)=2$.

Now $\dis(u,a_1)\leqslant2$. If $\dis(u,a_1)=2$ then $a_1ua_7$ is a path, a contradiction. Thus $\dis(u,a_1)=1$. Consequently, $\dis(a_1,a_4)=1$. And also $\dis(a_4,a_7)=1$. Similarly, we have a contradiction.
\end{proof}

Thus, $\dis(a_2,a_4)=1$, and by symmetry, $\dis(a_4,a_6)=1$  and then $\dis(a_2,a_6)=1$.

\begin{center}
\begin{tikzpicture}
\foreach \x in {1,2,...,7} \filldraw (\x,0) circle(2pt);
\draw [dashed](1,0)--(7,0);
 \draw (1,0) arc (150:30:1.15); 
\draw (1,0) arc (145:35:2.45);
 \draw (3,0) arc (150:30:1.15); 
 \draw (5,0) arc (150:30:1.15); 
\draw (3,0) arc (145:35:2.45);
\draw (1,0) arc (145:35:3.67);
 \draw (2,0) arc (-150:-30:1.15); 
  \draw (4,0) arc (-150:-30:1.15); 
\draw (2,0) arc (-145:-35:2.45);
\node at (1,-.3){$a_1$};\node at (2,.3){$a_2$};\node at (3,-.3){$a_3$};
\node at (4,.3){$a_4$};\node at (5.15,-.3){$a_5$};\node at (6,.3){$a_6$};
\node at (7,-.3){$a_7$};
\end{tikzpicture}
\end{center}

\textbf{Claim 4:}  $\dis(a_1,a_4)=1$.

\begin{proof}
By contradiction, let $\dis(a_1,a_4)=3$. 

\begin{center}
\begin{tikzpicture}
\foreach \x in {1,2,...,7} \filldraw (\x,0) circle(2pt);
\draw [dashed](1,0)--(7,0);
 \draw (1,0) arc (150:30:1.15); 
\draw (1,0) arc (145:35:2.45);
 \draw (3,0) arc (150:30:1.15); 
 \draw (5,0) arc (150:30:1.15); 
\draw (3,0) arc (145:35:2.45);
\draw (1,0) arc (145:35:3.67);
 \draw (2,0) arc (-150:-30:1.15); 
  \draw (4,0) arc (-150:-30:1.15); 
\draw (2,0) arc (-145:-35:2.45);
\node at (1,-.3){$a_1$};\node at (2,.3){$a_2$};\node at (3,-.3){$a_3$};
\node at (4,.3){$a_4$};\node at (5.15,-.3){$a_5$};\node at (6,.3){$a_6$};
\node at (7,-.3){$a_7$};
\draw(1,0)--(1.5,-.6)--(2,0);\node at (1.5,-.85){$u$};
\filldraw (1.5,-.6) circle(2pt);
\end{tikzpicture}
\end{center}

If $\dis(u,a_6)=1$ then $\dis(a_1,a_6)\leqslant2$. Thus $\dis(a_1,a_6)=1$ and  $\dis(a_1,a_4)\leqslant2$, a contradiction. Therefore, $\dis(u,a_6)=2$.
If $\dis(u,a_7)=2$ then $a_7ua_7$  is a path in $D_2(G)$, a contradiction. Thus  $\dis(u,a_7)=1$.
Thus $\dis(a_2,a_7)\leqslant2$ and $\dis(a_2,a_7)=1$. Then, $\dis(a_4,a_7)\leqslant2$ and so $\dis(a_4,a_7)=1$. Which shows $\dis(a_1,a_4)\leqslant2$, a contradiction.
\end{proof}

Hence, $\dis(a_1,a_4)=1$, and by symmetry, $\dis(a_4,a_7)=1$  and then $\dis(a_1,a_6)=\dis(a_2,a_7)=\dis(a_2,a_5)=\dis(a_3,a_6)=1$. Thus, the induced subgraph generated by $A=\{a_1,a_2,\cdots,a_7\}$ is the following.

\begin{center}
\begin{tikzpicture}

\filldraw (0,.7) circle(2pt);\node at (-.2,.9){$a_1$};
\filldraw (1,1.3) circle(2pt);\node at (1,1.6){$a_2$};
\filldraw (2,1) circle(2pt);\node at(2.2,1.2){$a_3$};
\filldraw (2.5,0) circle(2pt);\node at (2.8,0){$a_4$};
\filldraw (2,-1) circle(2pt);\node at (2.2,-1.2){$a_5$};
\filldraw (1,-1.3) circle(2pt);\node at (1,-1.6){$a_6$};
\filldraw (0,-.7) circle(2pt);\node at (-.2,-.9){$a_7$};
\draw (0,.7)--(2,1)--(2,-1)--(0,-.7)--(1,1.3)--(2.5,0)--(1,-1.3)--(1,1.3)--(0,-.7)(2,1.)--(1,-1.3)--(0,.7)--(2.5,0)--(0,-.7)--(0,.7)--(2,-1);
\draw (0,-.7)--(2,1);\draw (1,1.3)--(2,-1);
\end{tikzpicture}
\end{center}

The diameter of this subgraph is 2 and since $\diam(G)=3$, $G$ has a vertex $b$ where $\dis(b,A)=1$. 
%Let $\dis(b,a_2)=1$ and $A\not\subseteq N(b)$

\textbf{Claim 5:}   $|N(b)\cap A|\geqslant4$.
\begin{proof}
By contradiction, suppose that $|N(b)\cap A|\leqslant3$. There are at least 4 vertices $a_{i_1},a_{i_2},a_{i_3},a_{i_4}$, where $1\leqslant i_1\leqslant  \cdots\leqslant i_4\leqslant 7$ such that $\dis(b,a_{i_k})=2$, $k=1,2,3,4$. 
Then $a_{i_1}ba_{i_4}$ is a path in $D_2(G)$ while $\diss(a_{i_1},a_{i_4})\geqslant3$, a contradiction.
\end{proof}

Let $B= N[A] $ and $u,v\in B\setminus A$. By Claim 5, $|N(u)\cap A|\geqslant4$ and $|N(v)\cap A|\geqslant4$ and then $|N(u)\cap N(v)|\geqslant1$ and so $\dis(u,v)\leqslant2$. Therfore $\diam(\langle B\rangle)=2$. Since $\diam(G)=3$, $G$ has a vertex $c\in N[B]\setminus B$. The vertex $c$ has an adjacent vertex $w\in B$ and since $|N(w)\cap A|\geqslant4$, there are at least $4$ vertices $a_{i_1},a_{i_2},a_{i_3},a_{i_4}$, 
where $1\leqslant i_1\leqslant  \cdots\leqslant i_4\leqslant 7$ such that $\dis(c,a_{i_k})=2$, $k=1,2,3,4$. 
Then $a_{i_1}ba_{i_4}$ is a path in $D_2(G)$, a contradiction.
\end{proof}

By Theorems \ref{the23} and \ref{the26} we have the following.

\begin{corollary}
If $\diam(G)=3$ then $2\leqslant \diam(D_2(G))\leqslant 5$ and the inequalities are sharp.
\end{corollary}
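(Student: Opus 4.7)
The plan is to argue by contradiction: suppose $\diam(D_2(G))\geqslant 6$ and fix a geodesic $a_1a_2a_3a_4a_5a_6a_7$ of length $6$ in $D_2(G)$. The crucial observation is that for every $i<j$ with $j-i\geqslant 2$ we must have $\dis_G(a_i,a_j)\neq 2$, for otherwise $a_ia_j$ would be an edge of $D_2(G)$ and produce a shortcut on our geodesic. Since $\diam(G)=3$, this forces $\dis_G(a_i,a_j)\in\{1,3\}$ for all such pairs. This single principle (``distance $2$ anywhere inside $A=\{a_1,\dots,a_7\}$, or between $A$ and a neighbor of $A$, yields a $D_2$-shortcut'') will be applied again and again.

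My first goal is to rule out $\dis_G(a_i,a_j)=3$ for every such pair. I would begin with the outermost pair relative to the middle, $\dis_G(a_1,a_3)$: assuming it equals $3$, take a geodesic $a_1bca_3$ in $G$, and use the shortcut principle to constrain the distances from $b,c$ to $a_7$. Splitting on whether $\dis_G(a_3,a_7)$ is $1$ or $3$ (and in the latter case whether the two $G$-geodesics share vertices) yields a handful of subcases, each of which contradicts the $D_2(G)$-distance. Symmetrically $\dis_G(a_5,a_7)=1$. I would then repeat essentially the same local argument, one pair at a time, to establish $\dis_G(a_3,a_5)=1$, $\dis_G(a_2,a_4)=\dis_G(a_4,a_6)=1$, and finally $\dis_G(a_1,a_4)=\dis_G(a_4,a_7)=1$; with this in hand the remaining pairs $(a_1,a_5),(a_3,a_7),(a_1,a_7),(a_2,a_5),(a_2,a_6),(a_2,a_7),(a_3,a_6),(a_1,a_6)$ fall out by triangle composition. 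The conclusion is that in $G$ the induced subgraph on $A$ has every non-consecutive pair adjacent, so $\diam(\langle A\rangle_G)=2$.

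Since $\diam(G)=3$, some vertex of $G$ lies outside $A$. For any $v\in N(A)\setminus A$ I would show $|N(v)\cap A|\geqslant 4$: otherwise at least four $a_{i_k}$'s with $i_1<\dots<i_4$ satisfy $\dis_G(v,a_{i_k})=2$, giving a $D_2(G)$-path $a_{i_1}va_{i_4}$ of length $2$, impossible since $i_4-i_1\geqslant 3$. Two such vertices $u,v$ then share a neighbor in the $7$-element set $A$, so $\dis_G(u,v)\leqslant 2$; hence the ball $B=N[A]$ has $G$-diameter $\leqslant 2$. Because $\diam(G)=3$, there must be $c\in V(G)\setminus B$ with $\dis_G(c,B)=1$, say $cw\in E(G)$ with $w\in B$. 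Since $|N(w)\cap A|\geqslant 4$, the vertex $c$ is at $G$-distance exactly $2$ from at least four $a_i$'s, and the same shortcut argument applied to $c$ gives the final contradiction.

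The main obstacle is the first block of case analysis, i.e.\ the claims that $\dis_G(a_1,a_3)$, $\dis_G(a_3,a_5)$, and $\dis_G(a_1,a_4)$ all equal $1$. One has to juggle several hypothetical $G$-geodesics simultaneously and track which auxiliary vertices can or cannot coincide; every time a new vertex $x$ appears, the shortcut principle must be invoked for the pairs $(x,a_i)$ for $i$ near both ends of the $D_2$-geodesic. Once these claims are in place the rest of the argument is a clean counting/neighborhood extension and not technically demanding.
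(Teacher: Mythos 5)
There is a genuine gap: the statement you were asked to prove is not just the two bounds but also that ``the inequalities are sharp,'' and your proposal never addresses sharpness at all. What you outline is essentially a re-derivation of the paper's Theorem \ref{the26} (the bound $\diam(D_2(G))\leqslant 5$ via a contradiction from a length-$6$ geodesic in $D_2(G)$, the claims forcing $\dis(a_i,a_j)=1$ for non-consecutive pairs, and the neighborhood-counting extension), which the corollary in the paper merely cites. The actual content of the paper's proof of this corollary is the exhibition of extremal examples: a graph with $\diam(G)=3$ and $\diam(D_2(G))=2$ (Example \ref{exa26}, a $5$-cycle with a pendant vertex), and a graph with $\diam(G)=3$ and $\diam(D_2(G))=5$ (a six-vertex graph whose $2$-distance graph is a path of length $5$). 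Without producing such examples, the sharpness claim is simply unproved. You also do not mention the lower bound $2\leqslant\diam(D_2(G))$; it is immediate (two vertices at $G$-distance $3$ are non-adjacent in $D_2(G)$, or cite Theorem \ref{the23} with $\left\lceil \frac{3}{2}\right\rceil=2$), but it is part of the statement and should be stated.

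On the part you do sketch, the route is the same as the paper's proof of Theorem \ref{the26}, so it is not a different approach, and it inherits the same fine points as the paper (for instance, in the counting step one asserts that at least four vertices of $A$ are at $G$-distance exactly $2$ from an outside vertex, whereas a priori some non-neighbors could be at distance $3$; this needs the same care in your write-up as in the paper's Claim 5). The decisive issue, however, remains the missing sharpness examples and the omitted lower bound, which constitute most of what this corollary's proof is actually required to supply beyond the already-proved theorems.
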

\begin{proof}
By Example \ref{exa26} the first inequality is sharp. Now, the following graph $G$ shows that the second inequality is sharp too.

\begin{center}

\begin{tikzpicture}
\draw (0,0)--(3,0)--(2.5,.85)--(1.5,.85)--(1,0);
\draw (2.5,.85)--(2,0)--(1.5,.85);
\filldraw (0,0) circle(2pt);\filldraw (1,0) circle(2pt);\filldraw (2,0) circle(2pt);
\filldraw (3,0) circle(2pt);\filldraw (2.5,.85) circle(2pt);\filldraw (1.5,.85) circle(2pt);
\node at (1.5,-.5){$G$};

\draw (5,0)--(5.5,.85)--(6,0)--(6.5,.85)--(7,0)--(7.5,.85);
\filldraw (5,0) circle(2pt);\filldraw (6,0) circle(2pt);\filldraw (7,0) circle(2pt);
\filldraw (5.5,.85) circle(2pt);\filldraw (6.5,.85) circle(2pt);\filldraw (7.5,.85) circle(2pt);
\node at (6.25,-.5){$D_2(G)$};

\end{tikzpicture}
\end{center}
\end{proof}

\section{Conclusions}\label{sec3}

In this paper, we focused on the diameter of the connected $2$-distance graph of graphs. We found a sharp lower bound for  the diameter of $2$-distance graphs relative to the diameter of their initial graphs. Also, we showed  that $\diam(D_2(G))$ can be any integer $t\geqslant2$ when $\diam(G)=2$. But for $\diam(G)=3$, we obtained a sharp upper bound for $\diam(D_2(G))$. The above achievements raise the following questions.

\begin{conjecture}
Let $\diam(G)=n\geqslant3$. Then $\diam(D_2(G))\leqslant n+2$.
\end{conjecture}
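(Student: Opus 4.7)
The plan is induction on $n = \diam(G)$ with base case $n=3$ supplied by Theorem \ref{the26} (yielding $\diam(D_2(G)) \leqslant 5 = 3+2$). For $n \geqslant 4$, I would argue by contradiction: assume $u, v \in V(G)$ realize $\diss(u, v) \geqslant n+3$, witnessed by a shortest $D_2$-path $u = a_1, a_2, \ldots, a_{n+4} = v$, where each edge $a_i a_{i+1}$ lifts to a 2-path $a_i b_i a_{i+1}$ in $G$.

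The main constructive tool is an ``every-other-vertex'' shortcut. Let $P: u = p_0, p_1, \ldots, p_d = v$ be a shortest $G$-path with $d \leqslant n$. Because a shortest path has no chord, $\dis_G(p_i, p_{i+2}) = 2$ for every $i$, so the even subsequence $u, p_2, p_4, \ldots$ is a valid $D_2$-path (vertices are distinct since $P$ has no repeats). For $d$ even this already gives $\diss(u, v) \leqslant d/2 \leqslant n/2$, well below the target $n+2$. The delicate case is $d$ odd, where the subsequence ends at $p_{d-1}$, a $G$-neighbor of $v$, and we must bridge a $D_2$-gap between $p_{d-1}$ and $v$. Since $D_2(G)$ is connected, $G$ contains an odd cycle (the contrapositive of Khormali's result cited in the introduction), and one can detour through it to obtain a walk of the opposite parity. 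A supporting lemma to prove is: for any $G$-edge $xy$ with $D_2(G)$ connected and $\diam(G)=n$, $\diss(x, y) \leqslant (n+5)/2$ or similar; combined with the even-vertex construction this would yield $\diss(u, v) \leqslant (d-1)/2 + \diss(p_{d-1}, v) \leqslant (n-1)/2 + (n+5)/2 = n+2$.

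The main obstacle is bounding the length of this parity-changing bridge. A priori, the nearest odd cycle to $v$ could lie at $G$-distance as large as $n/2$, and a naive detour around it could add $\Theta(n)$ $D_2$-steps, which is far too many. Sharper control seems to require a careful BFS argument rooted at $v$, exploiting both the shortest $G$-path to $u$ and the location of odd cycles relative to that path, to produce a short 2-path detour around $v$ of the form $p_{d-1} \, q \, v$ with $\dis_G(p_{d-1},q) = \dis_G(q,v) = 2$ whenever possible.

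As a fallback, one can mimic the structural casework of Theorem \ref{the26}: assume $\diss(u, v) = n+3$ and, through analogues of Claims 1--5, show that the pairwise $G$-distances among $A = \{a_1, \ldots, a_{n+4}\}$ are so constrained that the induced subgraph $\langle A \rangle$ has $G$-diameter at most $n-1$, and that any vertex $b \notin A$ adjacent in $G$ to $A$ must have many $A$-neighbors (forcing $\dis_G$-contractions across $A$). Iterating this on $N[A]$ as in the closing steps of Theorem \ref{the26} should eventually force $\diam(G) < n$, a contradiction. The difficulty with this route is that the case analysis in Theorem \ref{the26} was already dense for $n=3$; a uniform invariant (perhaps a BFS-layer argument from $u$, or a double-counting of common neighbors across the $a_i$) would be needed to avoid combinatorial explosion as $n$ grows.
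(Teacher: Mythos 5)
This statement is posed in the paper as an open conjecture: the paper supplies no proof of it, so there is nothing to match your attempt against, and your proposal must stand on its own. It does not: it is a proof plan with the decisive step missing, and the key lemma you propose to fill that step is in fact false. Your even-index shortcut is sound (on a shortest $G$-path $p_0p_1\cdots p_d$ one indeed has $\dis(p_i,p_{i+2})=2$, so $\diss(u,v)\leqslant d/2$ when $d$ is even), but for odd $d$ everything hinges on the bridging lemma ``$\diss(x,y)\leqslant (n+5)/2$ for every edge $xy$.'' Take $G=C_{2k+1}$: then $D_2(G)\cong C_{2k+1}$, $n=\diam(G)=k$, and for adjacent $x,y$ each $D_2$-step moves $\pm2$ around the cycle, so a net displacement of $1 \pmod{2k+1}$ requires at least $k$ steps; hence $\diss(x,y)=k=n$, which exceeds $(n+5)/2$ for all $k\geqslant 6$ (already $C_{13}$ is a counterexample). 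Plugging the true worst-case value $\diss(p_{d-1},v)\approx n$ into your additive decomposition $\diss(u,v)\leqslant (d-1)/2+\diss(p_{d-1},v)$ yields only a bound of roughly $\tfrac{3}{2}n$, not $n+2$. Note that odd cycles still satisfy the conjecture ($\diam(D_2(C_{2k+1}))=k\leqslant k+2$); what fails is precisely your decomposition, because the path to $p_{d-1}$ and the parity-fixing detour can be made to overlap, a saving that an additive triangle-inequality bound cannot capture. So the approach is structurally lossy, not merely missing a technical estimate.

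Two further defects: the induction framing is vacuous, since your argument for $n\geqslant4$ never invokes the inductive hypothesis for $n-1$ (Theorem \ref{the26} serves only as the case $n=3$, not as a base that anything builds on); and the fallback of replaying Claims 1--5 of Theorem \ref{the26} for a $D_2$-path of length $n+3$ is a research programme, not an argument --- you yourself note that the casework there is already dense at $n=3$ and that a uniform invariant ``would be needed,'' without exhibiting one. The correct takeaway from your analysis is a genuine reduction of independent interest --- the conjecture for pairs at even $G$-distance follows immediately from the shortcut, so the whole content lies in bounding $\diss(x,y)$ across a single $G$-edge in terms of $\diam(G)$ --- but with the proposed bound refuted by odd cycles, the conjecture remains open and your proposal does not prove it.
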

\noindent
\textbf{Problem 3.2.} Charactrize all graphs $G$ which $\diam(G)=2$ and $\diam(D_2(G))=2 \text{ or }3$.

 In Theorem \ref{the23}, we proved $\diam(D_2(G))\geqslant \left\lceil \frac{1}{2}\diam(G)\right\rceil$ and in Example \ref{exa27} we show this inequality is sharp.

\noindent
\textbf{Problem 3.3.} Are there some graphs with $\diam(G)=n$ and $\diam(D_2(G))= \left\lceil\frac{1}{2}\diam(G)\right\rceil+1$.

\noindent
\textbf{\Large Declarations}

\noindent
\textbf{Conflict of interest} No potential conflict of interest was reported by the authors.


\begin{thebibliography}{9}

\bibitem{af2}  A. Azimi and M. Farrokhi D. G., \textit{Simple graphs whose 2-distance graphs are paths or cycles}, Le Matematiche (Catania) 69(2) (2014) 183–191.

\bibitem{af1} A. Azimi and M. Farrokhi D. G., \textit{Self 2-distance graphs}, Canad. Math. Bull. 60(1) (2017) 26–42.

\bibitem{bhl} J. W. Boland, T. W. Haynes and L. M. Lawson, \textit{Domination from a distance}, Congr. Numer. 103 (1994) 89–96.

\bibitem{bm} J. A. Bondy and U. S. R. Murty,  \textit{Graph Theory} (Springer, 2008).

\bibitem{c} R. P. Ching, \textit{Characterizing 2-distance graphs}, Asian-European Journal of Mathematics, Vol. 12, No. 1 (2019) 1950006 (10 pages).

\bibitem{hhk} F. Harary, C. Hoede and D. Kadlecek,  \textit {Graph-valued functions related to step graphs}, J. Comb. Inf. Syst. Sci. 7 (1982) 231–246.

\bibitem{jm}S. H. Jafari and S.R. Musawi Connectivity of 2-distance graphs. Submitted

\bibitem{k} O. Khormali, On the connectivity of k-distance graphs, \textit{Electronic Journal of Graph Theory and Applications} 5(1) (2017), 83--93.

\bibitem{l} J. C. T. Leonor, \textit{Periodicity and convergence of graphs under the 2-distance operator}, Master’s Thesis (unpublished), Pamantasan ng Lungsod ng Marikina, 2017.

\bibitem{p} E. Prisner, Graph Dynamics (Longman House, 1995).

\bibitem{rc} R. Rajkumar, S. Celine Prabha, Solutions of some 2-distance graph equations, \textit{AIP Conference Proceedings}  2112, 020109 (2019).
  
\bibitem{z} B. Zelinka, \textit{A note on periodicity of the 2-distance operator}, Discuss. Math. Graph Theory 20 (2000) 267–269.





\end{thebibliography}
\end{document}